\documentclass[10pt,preprint]{elsarticle}

\usepackage{amsmath}
\usepackage{amssymb}
\usepackage{amsthm}
\usepackage{bm}
\usepackage{mathtools}
\mathtoolsset{showonlyrefs=true}

\newcommand{\eset}[1]{\left\lbrace #1 \right\rbrace}
\newcommand{\iset}[2]{\left\lbrace #1 \ \middle|\  #2 \right\rbrace}
\DeclareMathOperator*{\argmin}{arg\,min}

\newcommand{\ratio}{q}
\newcommand{\norm}[2][p]{\| #2 \|_{#1}}
\newcommand{\getheight}{\rule{0em}{2.5ex}}

\newtheoremstyle{csutilitystyle}%
  {}%
  {}%
  {\normalfont}%
  {}%
  {\bfseries}%
  {. }%
  { }%
  {}%
\theoremstyle{csutilitystyle}

\newtheorem{theorem}{Theorem}
\newtheorem*{theorem*}{Theorem}

\newtheorem*{definition*}{Definition}
\newtheorem{lemma}[theorem]{Lemma}
\newtheorem*{lemma*}{Lemma}

\newtheorem*{corollary*}{Corollary}

\newtheorem*{proposition*}{Proposition}

\newtheorem*{problem*}{Problem}

\newtheorem*{Proof*}{Proof}
\renewenvironment{proof}{
\begin{Proof*}%
}{
\qed
\end{Proof*}%
}

\journal{a Journal}

\begin{document}

\begin{frontmatter}

\title{%
A generalization of the steepest-edge rule and \\
its number of simplex iterations for a nondegenerate LP
}

\author{Masaya Tano\corref{cor1}\fnref{label1}}
\ead{msytano@gmail.com}
\cortext[cor1]{Corresponding author. Tel.: +81-42-388-7451.}
\address[label1]{%
Graduate School of Engineering,\\
Tokyo University of Agriculture and Technology,\\
2-24-16 Naka-cho, Koganei-shi, Tokyo 184-8588, Japan\fnref{label1}}
\author{Ryuhei Miyashiro\fnref{label2}}
\address[label2]{%
Institute of Engineering,\\
Tokyo University of Agriculture and Technology,\\
2-24-16 Naka-cho, Koganei-shi, Tokyo 184-8588, Japan\fnref{label2}}
\author{Tomonari Kitahara\fnref{label3}}
\address[label3]{Department of Industrial Engineering and Economics,\\
Tokyo Institute of Technology, \\
2-12-1-W9-62 Ookayama, Meguro-ku, Tokyo 152-8550, Japan\fnref{label3}}

\begin{abstract}
In this paper, we propose a $p$-norm rule, which is a generalization of the steepest-edge rule, as a pivoting rule for the simplex method.
For a nondegenerate linear programming problem, we show upper bounds for the number of iterations of the simplex method with the steepest-edge and $p$-norm rules.
One of the upper bounds is given by a function of the number of variables, that of constraints, and the minimum and maximum positive elements in all basic feasible solutions.
\end{abstract}

\begin{keyword}
Linear programming \sep
Simplex method \sep
Steepest-edge rule \sep
$p$-norm rule \sep
Simplex iteration
\end{keyword}

\end{frontmatter}

\section{Introduction} \label{sec:Introduction}

The simplex method, an algorithm for a linear programming problem~(LP), has been improved since Dantzig~\cite{Dantzig-1963} developed it.
The primal simplex method starts from an initial basic feasible solution (BFS) and repeats exchanging a basic variable with a nonbasic variable until the solution satisfies the optimality condition.
A variable changed from a nonbasic to a basic variable is called an entering variable, while one for the other direction is called a leaving variable.

A pivoting rule is a criterion to choose entering and leaving variables.
The oldest rule is the most negative coefficient rule, which was developed by Dantzig.
It is well known that the simplex method with the most negative coefficient rule is not a polynomial-time algorithm~\cite{Klee-Minty-1972}.
Various pivoting rules have been proposed so far (cf.~\cite{Terlaky-Zhang-1993}), and they have great influence on the number of simplex iterations.

Nowadays there are pivoting rules that spend fewer simplex iterations in practice than the most negative coefficient rule.
The steepest-edge rule~\cite{Kuhn-Quandt-1963,Wolfe-Cutler-1963} is such a pivoting rule.
However, even for the steepest-edge rule, there is an LP in which the number of iterations is exponential with respect to the number of variables~\cite{Goldfarb-Sit-1979}.
It is a long-standing open question in linear programming whether there exists a pivoting rule or a variant of the simplex method that can solve any LP in polynomial time.

Some recent studies analyzed the simplex method from a different perspective.
Kitahara and Mizuno~\cite{Kitahara-Mizuno-2013-a} showed upper bounds for the number of different BFSs generated by the simplex method with the most negative coefficient rule.
These bounds are given as follows:
\begin{equation} \notag
  \left\lceil \frac{m\gamma}{\delta} \log\left( \frac{\getheight\bm{c}^{\top}\bm{x}^0-z^*}{\getheight\bm{c}^{\top}\bar{\bm{x}}-z^*} \right) \right\rceil \quad \mathrm{and}\quad
  (n-m) \left\lceil \frac{m\gamma}{\delta} \log\left(\frac{m\gamma}{\delta}\right) \right\rceil,
\end{equation}
where $m$ is the number of constraints, $n$ is that of variables, $\bm{x}^0$~is an initial BFS, $\bar{\bm{x}}$~is a BFS with the second smallest objective value, $z^*$~is the optimal value, and $\gamma$ and~$\delta$ are the maximum and minimum positive elements in all BFSs, respectively.
On the assumption that an LP is nondegenerate, these bounds yield upper bounds for the number of iterations of the simplex method with the most negative coefficient rule and that with the best improvement rule.

The contribution of this paper is to expand the research in~\cite{Kitahara-Mizuno-2013-a} into the steepest-edge rule.
We first propose a $p$-norm rule as a pivoting rule for the simplex method.
This rule is a generalized steepest-edge rule and coincides with the steepest-edge when $p=2$.
Next we prove that the simplex method with the $p$-norm rule finds an optimal solution in at most
\begin{equation} \notag
  \left\lceil m^{1+\frac{1}{p}} \frac{\gamma^2}{\delta^2} \log\left( \frac{\getheight\bm{c}^{\top}\bm{x}^0-z^*}{\getheight\bm{c}^{\top}\bar{\bm{x}}-z^*} \right) \right\rceil
\end{equation}
iterations for a nondegenerate LP\@.
We also show that an upper bound for the number of iterations can be expressed as a form independent of the objective value, 
\begin{equation} \notag
  (n-m) \left\lceil m^{1+\frac{1}{p}} \frac{\gamma^2}{\delta^2} \log\left(\frac{m\gamma}{\delta}\right) \right\rceil.
\end{equation}
Finally, we prove that, for an LP formulation of the discounted Markov decision problem with a fixed discount factor, the simplex method with the $p$-norm rule is a strongly polynomial-time algorithm.

The rest of this paper is organized as follows.
Section~\ref{sec:Preliminaries} explains some preliminaries and previous research.
Section~\ref{sec:Steepest-pnorm-rules} proposes the $p$-norm rule as a generalization of the steepest-edge rule.
In Section~\ref{sec:Analysis}, we show two upper bounds for the number of iterations of the simplex method with the $p$-norm rule.
Section~\ref{sec:MDP} presents the application of our results to the discounted Markov decision problem.
In Section~\ref{sec:Discussion-Future-Work}, we summarize the result of the analysis and discuss our future work.

\section{Preliminaries and previous research} \label{sec:Preliminaries}

In this section, we first review linear programming and the simplex method.
Next, we define the most negative coefficient rule as a pivoting rule for the simplex method.
Finally, we explain some previous studies about the number of iterations of the simplex method with the most negative coefficient rule.

\subsection{Linear programming problem} \label{sec:LP}

We consider an LP with $n$~nonnegative variables and $m$~constraints:
\begin{equation} \label{eq:LP-Primal}
  \begin{alignedat}{2}
    &\text{minimize}   & \qquad & \bm{c}^{\top} \bm{x} \\
    &\text{subject to} &        & \bm{A} \bm{x} = \bm{b}, \\
    &                  &        & \bm{x} \ge \bm{0},
  \end{alignedat}
\end{equation}
where $\bm{A} \in \mathbb{R}^{m \times n}$, $\bm{b} \in \mathbb{R}^m$, and $\bm{c} \in \mathbb{R}^n$ are given data and $\bm{x} \in \mathbb{R}^n$ is a variable vector.
The dual problem of~\eqref{eq:LP-Primal} is expressed as follows:
\begin{equation} \label{eq:LP-Dual}
  \begin{alignedat}{2}
    &\text{maximize}   & \qquad & \bm{b}^{\top} \bm{y} \\
    &\text{subject to} &        & \bm{A}^{\top} \bm{y} + \bm{s} = \bm{c}, \\
    &                  &        & \bm{s} \ge \bm{0},
  \end{alignedat}
\end{equation}
where $\bm{y} \in \mathbb{R}^m$ and $\bm{s} \in \mathbb{R}^n$ are variable vectors.
The problem~\eqref{eq:LP-Primal} is called the primal problem with respect to the dual problem~\eqref{eq:LP-Dual}.

The duality theorem is a well-known relationship between a primal and a dual problem.
\begin{theorem}[Duality Theorem] \label{thm:Dual-Theorem}
  If the primal problem~\eqref{eq:LP-Primal} has an optimal solution, so does the dual problem~\eqref{eq:LP-Dual} and these optimal values are equal.
  In other words, if $\bm{x}^*$ is a primal optimal solution, there is a dual optimal solution $(\bm{y}^*,\bm{s}^*)$ such that
  \begin{equation} \notag
    \bm{c}^{\top} \bm{x}^* = \bm{b}^{\top} \bm{y}^*.
  \end{equation}
\end{theorem}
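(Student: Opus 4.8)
The plan is to give a constructive proof built from an optimal basis of the primal, which fits the simplex framework used throughout this paper. The first step is to record \emph{weak duality}: if $\bm{x}$ is primal feasible and $(\bm{y},\bm{s})$ is dual feasible, then
\[
  \bm{c}^{\top}\bm{x} - \bm{b}^{\top}\bm{y}
  = \bm{c}^{\top}\bm{x} - (\bm{A}\bm{x})^{\top}\bm{y}
  = \bm{x}^{\top}(\bm{c} - \bm{A}^{\top}\bm{y})
  = \bm{x}^{\top}\bm{s} \ge 0,
\]
using $\bm{b} = \bm{A}\bm{x}$, $\bm{s} = \bm{c} - \bm{A}^{\top}\bm{y}$, and $\bm{x} \ge \bm{0}$, $\bm{s} \ge \bm{0}$. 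Thus $\bm{b}^{\top}\bm{y} \le \bm{c}^{\top}\bm{x}$ for every such pair, and it suffices to exhibit a single dual feasible point whose objective equals the primal optimum $\bm{c}^{\top}\bm{x}^*$.

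Second, since the primal possesses an optimal solution, the fundamental theorem of linear programming supplies an optimal basic feasible solution. I would partition the columns of $\bm{A}$ into a nonsingular basic part $\bm{B}$ and a nonbasic part $\bm{N}$, splitting $\bm{c}$ accordingly into $\bm{c}_B$ and $\bm{c}_N$, and write $\bm{x}^*$ for the optimal BFS, whose nonbasic entries are zero and whose basic entries are $\bm{B}^{-1}\bm{b}$. I then define the candidate dual vector $\bm{y}^* = (\bm{B}^{-1})^{\top}\bm{c}_B$ and set $\bm{s}^* = \bm{c} - \bm{A}^{\top}\bm{y}^*$. The basic components of $\bm{s}^*$ vanish by construction, while the nonbasic components are precisely the reduced costs, which are nonnegative at an optimal basis; hence $\bm{s}^* \ge \bm{0}$ and $(\bm{y}^*,\bm{s}^*)$ is dual feasible.

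Third, I would verify that the objectives coincide. Because the nonbasic part of $\bm{x}^*$ is zero,
\[
  \bm{b}^{\top}\bm{y}^*
  = \bm{b}^{\top}(\bm{B}^{-1})^{\top}\bm{c}_B
  = (\bm{B}^{-1}\bm{b})^{\top}\bm{c}_B
  = \bm{c}_B^{\top}\bm{x}_B^*
  = \bm{c}^{\top}\bm{x}^*.
\]
Together with weak duality, this identity forces $(\bm{y}^*,\bm{s}^*)$ to be dual optimal and the two optimal values to be equal, which is the claim.

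The only step that genuinely demands care is the appeal to the fundamental theorem of linear programming, namely that an LP with an optimal solution admits an optimal basic feasible solution, together with the optimality criterion that the reduced costs at an optimal basis are nonnegative. Proving these from first principles is the substantive obstacle, since one must argue both that an optimum can be moved to a vertex and that nonnegativity of reduced costs characterizes optimality even in the presence of degeneracy. Once those facts are taken as known, the construction of $(\bm{y}^*,\bm{s}^*)$ and the verification above are routine linear algebra.
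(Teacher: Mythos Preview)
Your argument is correct and is the standard constructive proof of strong duality via an optimal basis: weak duality plus the basic dual solution $\bm{y}^* = (\bm{A}_B^{\top})^{-1}\bm{c}_B$ with nonnegative reduced costs. There is nothing to compare, however, because the paper does not prove this theorem at all; it is simply stated as a classical fact (``The duality theorem is a well-known relationship between a primal and a dual problem'') and used as background for the relation $\bm{c}^{\top}\bm{x} - \bm{b}^{\top}\bm{y} = \bm{x}^{\top}\bm{s}$ that follows. Your write-up would serve perfectly well as a supplied proof, and your caveat about relying on the existence of an optimal BFS and the reduced-cost optimality criterion is exactly the right place to flag the nontrivial input.
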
\noindent
In addition, between a primal feasible solution $\bm{x}$ and a dual feasible solution $(\bm{y},\bm{s})$, the following relationship holds:
\begin{equation} \label{eq:Primal-Dual-Gap}
  \bm{c}^{\top} \bm{x} - \bm{b}^{\top} \bm{y} = \bm{x}^{\top} \bm{s}.
\end{equation}

Next, we define a dictionary.
Let $\{B, N\}$ be a partition of the index set of variables $\eset{1,2,\ldots,n}$.
We can split $\bm{A}$, $\bm{c}$, and~$\bm{x}$ corresponding to $\{B, N\}$ as follows:
\begin{equation} \notag
  \bm{A} = \begin{bmatrix}
    \bm{A}_B & \bm{A}_N
  \end{bmatrix},\quad
  \bm{c} = \begin{bmatrix}
    \bm{c}_B\\
    \bm{c}_N
  \end{bmatrix},\quad
  \bm{x} = \begin{bmatrix}
    \bm{x}_B\\
    \bm{x}_N
  \end{bmatrix}.
\end{equation}
By these splits, the problem~\eqref{eq:LP-Primal} is written as
\begin{equation} \label{eq:Primal-Separate}
  \begin{alignedat}{2}
    &\text{minimize}   & \qquad & \bm{c}_B^{\top} \bm{x}_B + \bm{c}_N^{\top} \bm{x}_N\\
    &\text{subject to} &        & \bm{A}_B \bm{x}_B + \bm{A}_N \bm{x}_N = \bm{b},\\
    &                  &        & \bm{x} = (\bm{x}_B, \bm{x}_N) \ge \bm{0}.
  \end{alignedat}
\end{equation}
If $|B| = m$ and $\bm{A}_B$ is nonsingular, $B$ and $N=\{1,2,\ldots,n\} \setminus B$ are called a basis and a nonbasis, respectively.

The problem~\eqref{eq:Primal-Separate} is transformed by multiplying the equality constraint by~$\bm{A}_B^{-1}$ from left:
\begin{equation} \label{eq:Primal-Dictionary}
  \begin{alignedat}{2}
    &\text{minimize}   & \qquad & z_0 + \bar{\bm{c}}_N^{\top} \bm{x}_N\\
    &\text{subject to} &        & \bm{x}_B = \bar{\bm{b}} - \bar{\bm{A}}_N \bm{x}_N,\\
    &                  &        & \bm{x} = (\bm{x}_B, \bm{x}_N) \ge \bm{0},
  \end{alignedat}
\end{equation}
where $z_0 = \bm{c}_B^{\top} \bm{A}_B^{-1} \bm{b}$, $\bar{\bm{c}}_N = \bm{c}_N - \bm{A}_N^{\top} (\bm{A}_B^{\top})^{-1} \bm{c}_B$, $\bar{\bm{b}} = \bm{A}_B^{-1} \bm{b}$, and $\bar{\bm{A}}_N = \bm{A}_B^{-1} \bm{A}_N$.
The form~\eqref{eq:Primal-Dictionary} is called a dictionary for a basis $B$.
The vector $\bar{\bm{c}}_N$ is called a reduced cost vector and $\bar{\bm{A}}_N$ is called a nonbasic matrix.
A basic solution is a solution $\bm{x}$ such that $(\bm{x}_B,\bm{x}_N) = (\bar{\bm{b}}, \bm{0})$ in the dictionary~\eqref{eq:Primal-Dictionary}.
The elements of $\bm{x}_B$ are basic variables and those of $\bm{x}_N$ are nonbasic variables.
A basic solution $\bm{x}$ is called a BFS (basic feasible solution) if $\bm{x}_B\ge\bm{0}$.
A basis~$B$ and a nonbasis~$N$ are respectively called a feasible basis and a feasible nonbasis if the corresponding basic solution is feasible.

\subsection{Simplex method} \label{sec:Simplex-Method}

For a given BFS in the dictionary~\eqref{eq:Primal-Dictionary}, if $\bar{\bm{c}}_N \ge \bm{0}$, then the BFS is optimal;
otherwise, increasing the value of a nonbasic variable with a negative reduced cost decreases the objective value.
We explain this procedure---the simplex method.

We use the following notations unless otherwise stated:
\begin{equation} \notag
  \bm{x}_B = \begin{bmatrix}
    x_{i_1}\\
    x_{i_2}\\
    \vdots\\
    x_{i_m}
  \end{bmatrix},~
  \bm{x}_N = \begin{bmatrix}
    x_{j_1}\\
    x_{j_2}\\
    \vdots\\
    x_{j_\ell}
  \end{bmatrix},~
  \bar{\bm{b}} = \begin{bmatrix}
    \bar{b}_1\\
    \bar{b}_2\\
    \vdots\\
    \bar{b}_m
  \end{bmatrix},~
  \bar{\bm{c}}_N = \begin{bmatrix}
    \bar{c}_1\\
    \bar{c}_2\\
    \vdots\\
    \bar{c}_\ell\\
  \end{bmatrix},~
  \bar{\bm{A}}_N = \begin{bmatrix}
    \bar{a}_{11} & \bar{a}_{12} & \cdots & \bar{a}_{1\ell}\\
    \bar{a}_{21} & \bar{a}_{22} & \cdots & \bar{a}_{2\ell}\\
    \vdots       & \vdots       & \ddots & \vdots\\
    \bar{a}_{m1} & \bar{a}_{m2} & \cdots & \bar{a}_{m\ell}
  \end{bmatrix},
\end{equation}
where $\ell = n-m$.
By using this notation, the dictionary~\eqref{eq:Primal-Dictionary} is expressed as follows:
\begin{equation} \label{eq:Primal-Dictionary-Plain}
  \newlength{\len}
  \settowidth{\len}{ $x_{i_1}, x_{i_2}, \ldots, x_{i_m}, x_{j_1}, x_{j_2}, \ldots, x_{j_\ell} \ge 0.$ }
  \begin{alignedat}{12}
    &\text{minimize}   & \qquad         &       & z_0       & {}+{} & \bar{c}_1    x_{j_1} & {}+{} & \cdots & {}+{} & \bar{c}_s    x_{j_s} & {}+{} & \cdots & {}+{} & \bar{c}_{\ell}  x_{j_{\ell}} \phantom{,} \\
    &\text{subject to} & \qquad x_{i_1} & {}={} & \bar{b}_1 & {}-{} & \bar{a}_{11} x_{j_1} & {}-{} & \cdots & {}-{} & \bar{a}_{1s} x_{j_s} & {}-{} & \cdots & {}-{} & \bar{a}_{1\ell} x_{j_{\ell}}, \\
    &                  &                &       &           &       &                      &       & \vdots &       &                      &       &        &       & \\
    &                  & \qquad x_{i_r} & {}={} & \bar{b}_r & {}-{} & \bar{a}_{r1} x_{j_1} & {}-{} & \cdots & {}-{} & \bar{a}_{rs} x_{j_s} & {}-{} & \cdots & {}-{} & \bar{a}_{r\ell} x_{j_{\ell}}, \\
    &                  &                &       &           &       &                      &       & \vdots &       &                      &       &        &       & \\
    &                  & \qquad x_{i_m} & {}={} & \bar{b}_m & {}-{} & \bar{a}_{m1} x_{j_1} & {}-{} & \cdots & {}-{} & \bar{a}_{ms} x_{j_s} & {}-{} & \cdots & {}-{} & \bar{a}_{m\ell} x_{j_{\ell}},\\
    &                  & x_{i_1}, x_{i_2}, \ldots, x_{i_m}, x_{j_1}, x_{j_2}, \ldots, x_{j_\ell} \ge 0. \hspace{-\len} \hphantom{x_{i_m}}
  \end{alignedat}
\end{equation}

Let $x_{j_s}$ be a nonbasic variable having a negative reduced cost~$\bar{c}_s$.
By increasing the value of $x_{j_s}$ from $0$ to $\theta_s > 0$, the objective value decreases by $-\bar{c}_s \theta_s > 0$ and the value of each basic variable $x_{i_k}$ changes from~$\bar{b}_k$ to~$\bar{b}_k-\bar{a}_{ks} \theta_s$.
Set $\theta_s$ as follows:
\begin{equation} \label{eq:minimum-ratio}
  \theta_s = \min\iset{\frac{\bar{b}_k}{\bar{a}_{ks}}}{\bar{a}_{ks}>0;~ k=1,2,\ldots,m}.
\end{equation}
Let $r$ be an index $k \in \eset{1,2,\ldots,m}$ attaining the minimum in the equality~\eqref{eq:minimum-ratio}.
By the definition of $\theta_s$ and~$r$, the equality $x_{i_r} - \bar{a}_{rs} \theta_s = 0$ holds.
The basic variable~$x_{i_r}$ therefore leaves a basis (enters a nonbasis), and the nonbasic variable~$x_{j_s}$ accordingly enters a basis.
The variable changed from a basis to a nonbasis is called a leaving variable, and the one changed from a nonbasis to a basis is called an entering variable.
In such a way, a basic variable is exchanged with a nonbasic variable in each iteration of the simplex method.

A rule to choose an entering variable from nonbasic variables is called a pivoting rule.
Many pivoting rules have been proposed because they have great influence on the number of iterations of the simplex method.
In this section, we describe the most negative coefficient rule, i.e., Dantzig's original one, and the best improvement rule.

The most negative coefficient rule chooses a nonbasic variable with the smallest reduced cost as an entering variable.
That is, this rule focuses on a variable that maximizes the decrease in the objective value per unit increase.
In other words, for a given feasible dictionary~\eqref{eq:Primal-Dictionary}, the rule chooses an entering variable $x_{j_d}$ such that
\begin{equation} \notag
  d \in \argmin_{k} \iset{\bar{c}_k}{k=1,2,\ldots,\ell}.
\end{equation}

The best improvement rule is to choose, in each iteration, a nonbasic variable that decreases the objective value most.
The simplex method with the best improvement rule needs fewer simplex iterations than that with the most negative coefficient rule in practice~\cite{Ploskas-Samaras-2014}; however, the simplex method with the best improvement rule is an exponential-time algorithm in the worst case~\cite{Jeroslow-1973}, as well as that with the most negative coefficient rule~\cite{Klee-Minty-1972}.

\subsection{Previous research} \label{sec:Previous-Research}

Although a number of pivoting rules have been proposed, it is still an open problem whether there exists a pivoting rule that can solve any LP in polynomial time.
Recently, some research paid attention to the number of different BFSs generated by the simplex method to consider the complexity.

Kitahara and Mizuno~\cite{Kitahara-Mizuno-2013-a} showed an upper bound for the number of different BFSs generated by the simplex method with the most negative coefficient rule.
They proved that the number of different BFSs generated by the simplex method is at most
\begin{equation} \label{eq:bound-by-Kitahara-1}
  \left\lceil \frac{m\gamma}{\delta} \log\left( \frac{\bm{c}^{\top} \bm{x}^0 - z^*}{\bm{c}^{\top} \bar{\bm{x}} - z^*} \right) \right\rceil
\end{equation}
for a standard LP with $n$~variables and $m$~constraints (see Section~\ref{sec:Introduction} or Table~\ref{tbl:Notation} in Section~\ref{sec:Assumption-Notation} for the notations in \eqref{eq:bound-by-Kitahara-1} and~\eqref{eq:bound-by-Kitahara-2}).
Moreover, another upper bound independent of the objective value is given by
\begin{equation} \label{eq:bound-by-Kitahara-2}
  (n-m) \left\lceil \frac{m\gamma}{\delta} \log\left( \frac{m\gamma}{\delta} \right) \right\rceil.
\end{equation}
Note that, for a nondegenerate LP the upper bounds \eqref{eq:bound-by-Kitahara-1} and \eqref{eq:bound-by-Kitahara-2} for the number of different BFSs can be regarded as upper bounds for the number of iterations of the simplex method with the most negative coefficient rule.
In addition, they showed that, for a nondegenerate LP the bounds~\eqref{eq:bound-by-Kitahara-1} and~\eqref{eq:bound-by-Kitahara-2} are also upper bounds for the number of iterations of the simplex method with the best improvement rule.

Kitahara and Mizuno~\cite{Kitahara-Mizuno-2013-b} also studied any pivoting rule that does not increase the objective value in each iteration.
Using such a pivoting rule, the simplex method generates different BFSs at most
\begin{equation} \label{eq:bound-by-Kitahara-3}
  \left\lceil \min\eset{m,n-m} \frac{\gamma_P \gamma_D^{\prime}}{\delta_P \delta_D^{\prime}} \right\rceil,
\end{equation}
where $\gamma_P$ and~$\delta_P$ are respectively the maximum and minimum positive elements in all BFSs, and $\gamma_D^{\prime}$ and~$\delta_D^{\prime}$ are respectively the maximum and minimum absolute values of negative reduced costs in all BFSs.
On nondegeneracy assumption of LPs, the upper bound \eqref{eq:bound-by-Kitahara-3} for the number of different BFSs can be regarded as that for the number of simplex iterations.

\section{$p$-norm rule: generalization of steepest-edge}\label{sec:Steepest-pnorm-rules}

We explain the steepest-edge rule in Section~\ref{sec:steepest-edge-rule}, and then propose a $p$-norm rule in Section~\ref{sec:p-norm-rule}, which is a generalization of the steepest-edge rule.

\subsection{Steepest-edge rule} \label{sec:steepest-edge-rule}

According to Forrest and Goldfarb~\cite{Forrest-Goldfarb-1992},  efficiency of the steepest-edge rule was reported for the first time in the computational experiment by Kuhn and Quandt~\cite{Kuhn-Quandt-1963} and that by Wolfe and Cutler~\cite{Wolfe-Cutler-1963}; several computational experiments showed that the steepest-edge rule needs fewer simplex iterations than the most negative coefficient rule (e.g., \cite{Goldfarb-Reid-1977,Ploskas-Samaras-2014}) and than the best improvement rule (e.g., \cite{Ploskas-Samaras-2014}).

The steepest-edge rule considers the amount of the change of all variables in contrast to the most negative coefficient rule.
That is, this rule focuses on the difference vector of basic solutions and the decrease in the objective value.
When a nonbasic variable~$x_{j_k}$ increases by $\theta_k > 0$ in a feasible dictionary~\eqref{eq:Primal-Dictionary}, the objective value decreases by $-\bar{c}_k \theta_k > 0$ and the value of each basic variable~$x_{i_u}$ changes from~$\bar{b}_u$ to $\bar{b}_u -\bar{a}_{uk} \theta_k$.
Hence, the norm of the difference vector of the two solutions before and after increasing~$x_{j_k}$ is
\begin{equation} \notag
  \sqrt{ \theta_k^2 + \sum_{i=1}^{m} (\bar{a}_{ik} \theta_k)^2 } = \theta_k \sqrt{ 1 + \sum_{i=1}^m \bar{a}_{ik}^2 }~. 
\end{equation}
Therefore the decrease in the objective value per unit length of the difference vector is expressed as
\begin{equation} \label{eq:Objective-Norm-Ratio}
  \frac{-\bar{c}_k \theta_k}{\theta_k \sqrt{\getheight 1 + \sum_{i=1}^m \bar{a}_{ik}^2 }} = \frac{-\bar{c}_k}{\sqrt{\getheight 1 + \sum_{i=1}^m \bar{a}_{ik}^2 }}.
\end{equation}
The steepest-edge rule chooses a nonbasic variable that maximizes the above decrease.
In other words, a nonbasic variable $x_{j_s}$ is chosen as an entering variable such that the following is satisfied:
\begin{equation} \notag
  s \in \argmin_{k} \iset{ \frac{\bar{c}_k}{\sqrt{\getheight 1 + \sum_{i=1}^{m} \bar{a}_{ik}^2}} }{ k=1,2,\ldots,\ell }.
\end{equation}

\subsection{$p$-norm rule}\label{sec:p-norm-rule}

Let $\left(\bm{v}_N\right)_k$ be $k$-th column vector of an $n \times \ell$ matrix~$\bm{V}_N$, where 
\begin{equation}
  \bm{V}_N = \begin{bmatrix}
    - \bar{\bm{A}}_N \\
    \bm{I}
  \end{bmatrix}
\end{equation}
and $\bm{I}$ is the $\ell \times \ell$ identity matrix.
By this notation, the condition that an entering variable~$x_{j_s}$ must satisfy for the steepest-edge rule is written as
\begin{equation} \label{eq:steepest-Edge-Definition}
  s \in \argmin_{k} \iset{ \frac{\bar{c}_k}{\getheight\norm[2]{\left(\bm{v}_N\right)_k}} }{ k=1,2,\ldots,\ell }.
\end{equation}

Here we propose a $p$-norm rule as a pivoting rule for the simplex method.
This rule is a generalized steepest-edge where the norm is changed from 2-norm to $p$-norm in~\eqref{eq:steepest-Edge-Definition}.
The $p$-norm rule selects a nonbasic variable~$x_{j_s}$ that satisfies
\begin{equation} \notag
  s \in \argmin_{k} \iset{ \frac{\bar{c}_k}{\getheight\norm[p]{\left(\bm{v}_N\right)_k}} }{ k=1,2,\ldots,\ell }
\end{equation}
as an entering variable.

\section{Analysis of the number of iterations} \label{sec:Analysis}

In this section, we show upper bounds for the number of iterations of the simplex method with the $p$-norm rule.
When $p=2$, these upper bounds are ones for the steepest-edge rule.

\subsection{Assumptions and notations} \label{sec:Assumption-Notation}

For our analysis, we assume the following:
\begin{itemize}
  \item $\text{rank~} \bm{A} = m$;
  \item An initial BFS $\bm{x}^0$ is available;
  \item The problems~\eqref{eq:LP-Primal} and~\eqref{eq:LP-Dual} have optimal basic solutions, denoted by $\bm{x}^*$ and $(\bm{y}^*, \bm{s}^*)$, respectively, and these optimal values are~$z^*$;
  \item An initial BFS $\bm{x}^0$ is not optimal, i.e., its objective value is larger than~$z^*$;
  \item The problem~\eqref{eq:LP-Primal} is nondegenerate, i.e., each basic variable has a positive value in any BFS\@.
\end{itemize}
The first three assumptions are the same as the previous study~\cite{Kitahara-Mizuno-2013-a}.
Moreover, the fourth assumption was also imposed in~\cite{Kitahara-Mizuno-2013-a} implicitly.

In addition to the notations defined earlier, let $\mathcal{B}$ and~$\mathcal{N}$ be the set of all feasible bases and all feasible nonbases, respectively.
We summarize the notations in Table~\ref{tbl:Notation}.
\begin{table}
  \centering
  \caption{Notations}
  \begin{tabular}{ll}
    \hline
    $m$ & the number of constraints\\
    $n$ & the number of variables\\
    $\ell$ & the constant equal to~$n-m$ \\
    $\gamma$ & the maximum positive element in all BFSs\\
    $\delta$ & the minimum positive element in all BFSs\\
    $z^*$ & the optimal value\\
    $\bm{x}^0$ & an initial BFS\\
    $\bar{\bm{x}}$ & a second optimal BFS, i.e.,  a BFS with the second smallest objective value\\
    $\left(\bm{v}_N\right)_k$ & $k$-th column vector of an $n \times \ell$ matrix 
    $\bm{V}_N = \begin{bmatrix}
      - \bar{\bm{A}}_N \\
      \bm{I}
    \end{bmatrix}$. \\ 
    $\mathcal{B}$ & the set consisting of all feasible bases\\
    $\mathcal{N}$ & the set consisting of all feasible nonbases, i.e., \\
                  & $\mathcal{N} = \iset{ N }{ B \in \mathcal{B},~N = \eset{1,2,\ldots,n}\setminus B }$  \\ \hline
  \end{tabular}
  \label{tbl:Notation}
\end{table}

\subsection{Upper bound dependent on the second optimal value} \label{sec:Upper-Bound-Second-Opt}

We now start our analysis of the $p$-norm rule from the following lemma about a lower bound for the optimal value.
\begin{lemma}[Kitahara and Mizuno~\cite{Kitahara-Mizuno-2013-a}] \label{lem:Lower-Bound-Of-Optimal-Value}
  Let $\bm{x}^t$ be the $t$-th solution generated by the simplex method with the most negative coefficient rule and let $B^t$ and~$N^t$ be the corresponding basis and nonbasis to~$\bm{x}^t$, respectively.
Moreover, set $\Delta_d^t = -\min\iset{\bar{c}_j}{j=1,2,\ldots,\ell}$.
  Then we have
  \begin{equation} \label{eq:Lower-Bound-Of-Optimal-Value}
    z^* \ge \bm{c}^{\top} \bm{x}^t - m \gamma \Delta_d^t.
  \end{equation}
\end{lemma}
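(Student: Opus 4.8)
The plan is to evaluate the objective function at the optimal basic feasible solution $\bm{x}^*$ using the dictionary~\eqref{eq:Primal-Dictionary} associated with the current basis $B^t$, and then bound the resulting correction term. For any feasible $\bm{x}$, the dictionary lets us write $\bm{c}^{\top} \bm{x} = \bm{c}^{\top} \bm{x}^t + \sum_{j=1}^{\ell} \bar{c}_j\, x_j$, because the constant term $z_0$ of the dictionary at iteration~$t$ equals $\bm{c}^{\top} \bm{x}^t$ (the nonbasic variables vanish at $\bm{x}^t$). Substituting $\bm{x} = \bm{x}^*$ gives $z^* = \bm{c}^{\top} \bm{x}^t + \sum_{j=1}^{\ell} \bar{c}_j\, x_j^*$, where $x_j^*$ denotes the value at $\bm{x}^*$ of the $j$-th nonbasic variable with respect to $B^t$.

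First I would lower-bound the reduced costs. By the definition of $\Delta_d^t$ we have $\bar{c}_j \ge -\Delta_d^t$ for every $j$, and since $\bm{x}^*$ is feasible we have $x_j^* \ge 0$. Hence $\bar{c}_j\, x_j^* \ge -\Delta_d^t\, x_j^*$ termwise, and summing yields $z^* \ge \bm{c}^{\top} \bm{x}^t - \Delta_d^t \sum_{j=1}^{\ell} x_j^*$.

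The key step is to control $\sum_{j=1}^{\ell} x_j^*$. Here I would use that $\bm{x}^*$ is itself a basic feasible solution, so it has at most $m$ strictly positive entries, namely those indexed by its own basis. Therefore, among the $\ell$ nonbasic positions with respect to $B^t$, at most $m$ of the values $x_j^*$ are positive, and each such positive value is at most $\gamma$, the maximum positive element over all BFSs. Consequently $\sum_{j=1}^{\ell} x_j^* \le m\gamma$, and combining this with the previous bound gives $z^* \ge \bm{c}^{\top} \bm{x}^t - m\gamma\, \Delta_d^t$, as required.

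The point requiring care is this final counting argument: the naive estimate would let all $\ell = n-m$ nonbasic variables contribute, yielding an $(n-m)\gamma$ factor instead of $m\gamma$. The sharper $m\gamma$ bound comes precisely from the fact that an optimal BFS has at most $m$ nonzero components regardless of how many variables happen to be nonbasic in the current dictionary, so the sum is governed by $m$ rather than by $\ell$. Everything else is a direct substitution into the dictionary together with a single termwise inequality, and notably the argument does not depend on which pivoting rule produced $\bm{x}^t$.
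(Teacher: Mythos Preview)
Your argument is correct and is essentially identical to the paper's proof: both express $z^*$ via the dictionary at $B^t$ as $\bm{c}^{\top}\bm{x}^t + \bar{\bm{c}}_{N^t}^{\top}\bm{x}_{N^t}^*$, lower-bound each reduced cost by $-\Delta_d^t$, and then use that $\bm{x}^*$ has at most $m$ positive entries, each at most $\gamma$, to get $\bm{e}^{\top}\bm{x}_{N^t}^* \le m\gamma$. Your closing remark that the argument is rule-independent matches the paper's observation immediately following its proof.
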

\begin{proof}
  Let $\bm{x}^*$ be a basic optimal solution of the problem~\eqref{eq:LP-Primal}.
  Then we obtain
  \begin{equation} \notag
    \begin{alignedat}{1}
      z^* &= \bm{c}^{\top} \bm{x}^{*}\\
          &= \bm{c}_{B^t}^{\top} \bm{A}_{B^t}^{-1} \bm{b} + \bar{\bm{c}}_{N^t}^{\top} \bm{x}_{N^t}^* \\
          &\ge \bm{c}^{\top} \bm{x}^t - \Delta_d^t \bm{e}^{\top} \bm{x}_{N^t}^* \\
          &\ge \bm{c}^{\top} \bm{x}^t - m \gamma \Delta_d^t.
    \end{alignedat}
  \end{equation}
  The second inequality holds since $\bm{x}^*$ has $m$ positive elements and each element is bounded above by~$\gamma$.
  Thus, we have the inequality~\eqref{eq:Lower-Bound-Of-Optimal-Value}.
\end{proof}\noindent
This lemma was proven by Kitahara and Mizuno~\cite{Kitahara-Mizuno-2013-a}.
In their paper, they analyzed the simplex method with the most negative coefficient rule.
However, as above the proof is not based on a specific property of any pivoting rule.
Accordingly, Lemma~\ref{lem:Lower-Bound-Of-Optimal-Value} also holds for the simplex method with the $p$-norm rule.

Assume that a feasible dictionary~\eqref{eq:Primal-Dictionary} is given at the $t$-th iteration of the simplex method.
Let $x_{j_s}$ and~$x_{j_d}$ be entering variables chosen by the $p$-norm and most negative coefficient rules, respectively.
Then, by the definition of the $p$-norm rule, we have
\begin{equation} \label{eq:Reduced-Cost-Inequality}
  \frac{ \bar{c}_s }{ \getheight\norm[p]{\left(\bm{v}_N\right)_s} } \le \frac{ \bar{c}_d }{ \getheight\norm[p]{\left(\bm{v}_N\right)_d} }~.
\end{equation}
Due to the inequality~\eqref{eq:Reduced-Cost-Inequality}, we obtain
\begin{equation} \label{eq:Delta-Inequality}
  \Delta_s \ge \Delta_d \frac{ \getheight\norm[p]{\left(\bm{v}_N\right)_s} }{ \getheight\norm[p]{\left(\bm{v}_N\right)_d} },
\end{equation}
where $\Delta_s = -\bar{c}_s$ and $\Delta_d = -\bar{c}_d$.
We represent $\norm[p]{\left(\bm{v}_N\right)_s}/\norm[p]{\left(\bm{v}_N\right)_d}$ as~$\ratio_N$ and set $\ratio = \min\iset{\ratio_N}{N \in \mathcal{N}}$.
Then,
\begin{equation} \label{eq:Ratio-Inequality}
  \Delta_s \ge \ratio \Delta_d
\end{equation}
holds for all feasible nonbases.
We will analyze the detail of $\ratio$ in Section~\ref{sec:Lower-Bound-Ratio}.

Next, we show that, in each iteration of the simplex method with the $p$-norm rule, the difference between the objective value and the optimal value decreases at a constant ratio or more.

\begin{lemma} \label{lem:Gap-Rate}
  Let $\bm{x}^t$ and $\bm{x}^{t+1}$ be the $t$-th and $(t+1)$-th solutions of the simplex method with the $p$-norm rule, respectively.
  Then the following inequality holds:
  \begin{equation} \label{eq:Gap-Rate}
    \bm{c}^{\top} \bm{x}^{t+1} - z^* \le \left( 1 - \frac{\ratio \delta}{m \gamma} \right) \left( \bm{c}^{\top} \bm{x}^{t} - z^* \right).
  \end{equation}
\end{lemma}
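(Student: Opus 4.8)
The plan is to express the one-step decrease in the objective value as a product of two quantities and then bound each factor from below. When the $p$-norm rule selects the entering variable $x_{j_s}$ and increases it by the minimum ratio $\theta_s$ from~\eqref{eq:minimum-ratio}, the objective value changes by $\bar{c}_s \theta_s = -\Delta_s \theta_s$, so that $\bm{c}^{\top} \bm{x}^t - \bm{c}^{\top} \bm{x}^{t+1} = \Delta_s \theta_s$. I would then lower-bound $\theta_s$ and $\Delta_s$ separately, and combine the results with Lemma~\ref{lem:Lower-Bound-Of-Optimal-Value} to obtain the geometric decrease in~\eqref{eq:Gap-Rate}.

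The first and most delicate step is to show $\theta_s \ge \delta$. After the pivot, the entering variable $x_{j_s}$ becomes a basic variable of the new BFS $\bm{x}^{t+1}$ and takes precisely the value $\theta_s$. By the nondegeneracy assumption, every basic variable is strictly positive in any BFS, so $\theta_s$ is a positive element appearing in a genuine BFS and therefore satisfies $\theta_s \ge \delta$ by the definition of $\delta$. This is where nondegeneracy is used in an essential way: it is what legitimately couples the pivot step length to the global lower bound $\delta$, and I expect it to be the main obstacle, since one must read $\theta_s$ correctly as the value of a newly basic variable rather than as an arbitrary ratio.

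Next I would bound $\Delta_s$ from below. The inequality~\eqref{eq:Ratio-Inequality}, already established from the definition of the $p$-norm rule, gives $\Delta_s \ge \ratio \Delta_d$. Rearranging the conclusion~\eqref{eq:Lower-Bound-Of-Optimal-Value} of Lemma~\ref{lem:Lower-Bound-Of-Optimal-Value} yields $\Delta_d \ge (\bm{c}^{\top} \bm{x}^t - z^*)/(m\gamma)$, and hence $\Delta_s \ge \ratio (\bm{c}^{\top} \bm{x}^t - z^*)/(m\gamma)$.

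Finally, chaining these two bounds together gives
\begin{equation} \notag
  \bm{c}^{\top} \bm{x}^t - \bm{c}^{\top} \bm{x}^{t+1} = \Delta_s \theta_s \ge \frac{\ratio \delta}{m \gamma} \left( \bm{c}^{\top} \bm{x}^t - z^* \right).
\end{equation}
Subtracting $z^*$ from both sides and rearranging produces exactly~\eqref{eq:Gap-Rate}. These last manipulations are routine; the only point requiring genuine care is the justification of $\theta_s \ge \delta$ described above.
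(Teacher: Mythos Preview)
Your argument is correct and follows essentially the same path as the paper's proof: the paper writes the one-step decrease as $\Delta_s x_{j_s}^{t+1}$ (which is your $\Delta_s \theta_s$), bounds $x_{j_s}^{t+1}\ge\delta$ via the definition of $\delta$, and then combines~\eqref{eq:Ratio-Inequality} with Lemma~\ref{lem:Lower-Bound-Of-Optimal-Value} exactly as you do. Your explicit identification of the nondegeneracy assumption as the reason $\theta_s>0$ (and hence $\theta_s\ge\delta$) is a point the paper leaves implicit.
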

\begin{proof}
  The objective value decreases by $\Delta_s x_{j_s}^{t+1}$ at the iteration.
  Moreover, by the definition of $\delta$ and $\gamma$, 
  \begin{equation} \notag
    x_j > 0 ~\Longrightarrow~ \delta \le x_j \le \gamma \quad (j=1,2,\ldots,n)
  \end{equation}
  holds for any BFS $\bm{x}$.
  Thus, we obtain the following inequality:
  \begin{equation} \notag
    \bm{c}^{\top} \bm{x}^{t} - \bm{c}^{\top} \bm{x}^{t+1} = \Delta_s x_{j_s}^{t+1} \ge \Delta_s \delta.
  \end{equation}
  By the inequalities~\eqref{eq:Lower-Bound-Of-Optimal-Value} and~\eqref{eq:Ratio-Inequality}, we have
  \begin{equation} \notag
    \Delta_s \delta \ge \ratio \delta \Delta_d \ge \ratio \delta \cdot \frac{\bm{c}^{\top} \bm{x}^t - z^*}{m\gamma}.
  \end{equation}
  Hence, we obtain
  \begin{equation} \notag
    \bm{c}^{\top} \bm{x}^{t} - \bm{c}^{\top} \bm{x}^{t+1} \ge \frac{\ratio\delta}{m\gamma} \left( \bm{c}^{\top} \bm{x}^t - z^* \right),
  \end{equation}
which leads to the inequality~\eqref{eq:Gap-Rate}.
\end{proof}

Applying the inequality~\eqref{eq:Gap-Rate} from $t=0,1,2,\ldots$ in order, we have
\begin{equation} \label{eq:Gap-Rate-Initial}
  \bm{c}^{\top} \bm{x}^{t} - z^* \le \left( 1 - \frac{\ratio\delta}{m\gamma} \right)^t \left( \bm{c}^{\top} \bm{x}^0 - z^* \right).
\end{equation}
Let $\bar{\bm{x}}$ be a second optimal BFS, that is, whose objective value is the smallest except that of the optimal solution.
If the $t$-th solution $\bm{x}^{t}$ satisfies the following inequality, it is optimal:
\begin{equation} \notag
  \bm{c}^{\top} \bm{x}^t - z^* < \bm{c}^{\top} \bar{\bm{x}} - z^*.
\end{equation}
The simplex method with the $p$-norm rule therefore finds an optimal solution and terminates after $T$~iterations starting from an initial BFS~$\bm{x}^0$, where $T$ is the smallest integer~$t$ such that the right-hand side value in the inequality~\eqref{eq:Gap-Rate-Initial} is less than $\bm{c}^{\top} \bar{\bm{x}} - z^*$.
As discussed above, we have the following theorem.
\begin{theorem} \label{thm:Upper-Bound-With-Second-Optimal-Ratio}
  Let $\bar{\bm{x}}$ be a BFS of the problem~\eqref{eq:LP-Primal} whose objective value is the second smallest.
  The simplex method with the $p$-norm rule finds an optimal solution in at most 
  \begin{equation} \label{eq:Upper-Bound-With-Second-Optimal-Ratio}
    \left\lceil \frac{m\gamma}{\ratio\delta} \log\left( \frac{\getheight\bm{c}^{\top}\bm{x}^0-z^*}{\getheight\bm{c}^{\top}\bar{\bm{x}}-z^*} \right) \right\rceil
  \end{equation}
  iterations for the problem~\eqref{eq:LP-Primal}.
\end{theorem}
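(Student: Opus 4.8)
The plan is to combine the geometric-contraction bound~\eqref{eq:Gap-Rate-Initial} with the observation made just above the statement---that an iterate is optimal once its objective gap drops strictly below that of the second optimal BFS~$\bar{\bm{x}}$. Since $T$ is \emph{defined} as the smallest integer $t$ for which the right-hand side of~\eqref{eq:Gap-Rate-Initial} is less than $\bm{c}^{\top}\bar{\bm{x}}-z^*$, it suffices to exhibit one integer $t^{*}$ meeting this condition: then $T\le t^{*}$, and I will choose $t^{*}$ equal to the ceiling appearing in~\eqref{eq:Upper-Bound-With-Second-Optimal-Ratio}.

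Write $\rho=\ratio\delta/(m\gamma)$ and $R=(\bm{c}^{\top}\bm{x}^0-z^*)/(\bm{c}^{\top}\bar{\bm{x}}-z^*)$. First I would note $\rho\in(0,1]$: in the proof of Lemma~\ref{lem:Gap-Rate} the per-iteration decrease satisfies $\bm{c}^{\top}\bm{x}^{t}-\bm{c}^{\top}\bm{x}^{t+1}\ge\rho(\bm{c}^{\top}\bm{x}^{t}-z^*)$, and since the left-hand side cannot exceed $\bm{c}^{\top}\bm{x}^{t}-z^*$ (because $\bm{c}^{\top}\bm{x}^{t+1}\ge z^*$), we get $\rho\le1$, with positivity immediate. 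Setting $t^{*}=\lceil\rho^{-1}\log R\rceil\ge1$ (assuming $R>1$), the heart of the argument is the elementary strict inequality $1-\rho<e^{-\rho}$, valid for all $\rho>0$. Raising both nonnegative sides to the power $t^{*}$ and using $t^{*}\ge\rho^{-1}\log R$ gives
\begin{equation} \notag
  (1-\rho)^{t^{*}}<e^{-\rho t^{*}}\le e^{-\log R}=\frac{1}{R}=\frac{\bm{c}^{\top}\bar{\bm{x}}-z^*}{\bm{c}^{\top}\bm{x}^0-z^*}.
\end{equation}
Multiplying through by $\bm{c}^{\top}\bm{x}^0-z^*>0$ and substituting into~\eqref{eq:Gap-Rate-Initial} yields $\bm{c}^{\top}\bm{x}^{t^{*}}-z^*<\bm{c}^{\top}\bar{\bm{x}}-z^*$, so $\bm{x}^{t^{*}}$ can only be the optimal BFS (no other BFS has a smaller gap than the second optimal one). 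Hence $T\le t^{*}=\lceil(m\gamma/\ratio\delta)\log R\rceil$, which is exactly~\eqref{eq:Upper-Bound-With-Second-Optimal-Ratio}.

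The step I expect to demand the most care is the rounding---securing exactly $\lceil\cdot\rceil$ and not $\lceil\cdot\rceil+1$. This is precisely where the \emph{strict} estimate $1-\rho<e^{-\rho}$ (equivalently $-\log(1-\rho)>\rho$) is indispensable: the weaker $1-\rho\le e^{-\rho}$ only delivers ``$\le$'' in the displayed chain and then fails to certify optimality at $t^{*}$ in the borderline case where $\rho^{-1}\log R$ is itself an integer. I would also flag the boundary case $R=1$, i.e.\ an initial BFS tied in objective value with $\bar{\bm{x}}$, where $t^{*}=0$ and the strictness of $(1-\rho)^{t^{*}}<e^{-\rho t^{*}}$ collapses; the statement tacitly assumes $\bm{c}^{\top}\bm{x}^0>\bm{c}^{\top}\bar{\bm{x}}$, so that $R>1$ and $t^{*}\ge1$.
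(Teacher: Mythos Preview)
Your argument is correct and follows essentially the same route as the paper's proof: both start from the contraction bound~\eqref{eq:Gap-Rate-Initial}, reduce to the inequality $(1-\rho)^t(\bm{c}^{\top}\bm{x}^0-z^*)<\bm{c}^{\top}\bar{\bm{x}}-z^*$, and use the elementary strict estimate $-\log(1-\rho)>\rho$ (equivalently $1-\rho<e^{-\rho}$) to replace $-\log(1-\rho)$ by $\rho$ without losing the ceiling. Your write-up is a bit more explicit about why $\rho\le 1$ and about the $R=1$ boundary case, but the underlying idea is identical.
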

\begin{proof}
  As mentioned earlier, the smallest integer~$t$ satisfying the following inequality is an upper bound for the number of iterations:
  \begin{equation} \notag
    \left( 1 - \frac{\ratio\delta}{m\gamma} \right)^t \left( \bm{c}^{\top} \bm{x}^0 - z^* \right) < \bm{c}^{\top} \bar{\bm{x}} - z^*.
  \end{equation}
  Solving this inequality for $t$, we have
  \begin{equation}
    t > \frac{\displaystyle \log\left(\frac{\getheight\bm{c}^{\top}\bm{x}^0-z^*}{\getheight\bm{c}^{\top}\bar{\bm{x}}-z^*}\right) }{\displaystyle -\log\left(1-\frac{\ratio\delta}{m\gamma}\right) }.
  \end{equation}
  From $\displaystyle \frac{1}{x} > \frac{1}{-\log\left(1-x\right)}$ holding in $0 < x < 1$, we obtain
  \begin{equation} \notag
    \frac{m\gamma}{\ratio\delta} > -\frac{1}{\displaystyle\log\left(1-\frac{\ratio\delta}{m\gamma}\right)},
  \end{equation}
  which leads to the theorem.
\end{proof}

\subsection{Upper bound independent of objective value} \label{sec:Upper-Bound-Objless}

The upper bound in Section~\ref{sec:Upper-Bound-Second-Opt} is dependent on the objective value.
We turn to obtain another upper bound independent of the objective value.

First we consider the following lemma.
\begin{lemma}[Kitahara and Mizuno~\cite{Kitahara-Mizuno-2013-a}] \label{lem:Non-Optimal-Variable-Upper-Bound}
  Let $\bm{x}^{t}$ and $B^t$ be the $t$-th solution of the simplex method and the corresponding basis to~$\bm{x}^{t}$, respectively.
  If $\bm{x}^t$ is not optimal, there exists $\bar{\jmath}\in B^t$ that satisfies the following conditions:
  \begin{equation} \label{eq:Optimal-Dual-Slack-Lower-Bound}
    x_{\bar{\jmath}}^t > 0 \qquad \text{and} \qquad s_{\bar{\jmath}}^* \ge \frac{1}{m x_{\bar{\jmath}}^t} \left( \bm{c}^{\top} \bm{x}^t - z^* \right),
  \end{equation}
  where $\bm{s}^*$ is the slack vector of an optimal basic solution of the dual problem~\eqref{eq:LP-Dual}.
  Furthermore, the $k$-th solution $\bm{x}^{k}$ satisfies
  \begin{equation} \label{eq:Non-Optimal-Variable-Upper-Bound}
    x_{\bar{\jmath}}^{k} \le m x_{\bar{\jmath}}^t \, \frac{\bm{c}^{\top} \bm{x}^{k} - z^*}{\bm{c}^{\top} \bm{x}^t - z^*}
  \end{equation}
  for arbitrary positive integer $k$.
\end{lemma}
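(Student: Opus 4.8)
The plan is to exploit the fact that, for any primal feasible point, the duality gap against the optimal dual slack equals a sum of nonnegative complementarity products; both conclusions then follow by an averaging argument and by discarding terms.

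First I would specialize the primal--dual gap relation~\eqref{eq:Primal-Dual-Gap} to the pair $(\bm{x}^t, (\bm{y}^*,\bm{s}^*))$. Since $(\bm{y}^*,\bm{s}^*)$ is dual optimal, Theorem~\ref{thm:Dual-Theorem} gives $\bm{b}^\top \bm{y}^* = z^*$, so
\[
  \bm{c}^\top \bm{x}^t - z^* = (\bm{x}^t)^\top \bm{s}^* = \sum_{j=1}^n x_j^t s_j^*.
\]
Because $x_j^t = 0$ for every nonbasic index $j \in N^t$, only the $m$ basic indices in $B^t$ contribute to the sum, and nondegeneracy ensures each such $x_j^t$ is positive.

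Next I would apply a pigeonhole/averaging step. If $\bm{x}^t$ is not optimal the left-hand side is strictly positive, so the $m$ nonnegative summands $x_j^t s_j^*$ (for $j \in B^t$) have positive total; hence some index $\bar{\jmath}\in B^t$ satisfies $x_{\bar{\jmath}}^t s_{\bar{\jmath}}^* \ge \frac{1}{m}(\bm{c}^\top \bm{x}^t - z^*) > 0$. This forces $x_{\bar{\jmath}}^t > 0$, and dividing by $m x_{\bar{\jmath}}^t$ yields the lower bound on $s_{\bar{\jmath}}^*$ in~\eqref{eq:Optimal-Dual-Slack-Lower-Bound}. For the second claim I would reuse the same gap identity at the $k$-th solution: since $\bm{x}^k \ge \bm{0}$ and $\bm{s}^* \ge \bm{0}$, every summand in $(\bm{x}^k)^\top \bm{s}^*$ is nonnegative, so keeping only the $\bar{\jmath}$-term gives $x_{\bar{\jmath}}^k s_{\bar{\jmath}}^* \le \bm{c}^\top \bm{x}^k - z^*$. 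Substituting the lower bound $s_{\bar{\jmath}}^* \ge \frac{1}{m x_{\bar{\jmath}}^t}(\bm{c}^\top \bm{x}^t - z^*)$ and rearranging produces~\eqref{eq:Non-Optimal-Variable-Upper-Bound} for every positive integer $k$.

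I do not expect a serious obstacle here: the entire argument rests on the single identity~\eqref{eq:Primal-Dual-Gap} together with nonnegativity of $\bm{x}^k$ and $\bm{s}^*$. The only point needing care is confirming $x_{\bar{\jmath}}^t > 0$, which is automatic because the averaging step selects a strictly positive summand whenever the gap is positive. It is also worth emphasizing that $\bar{\jmath}$ is fixed at the $t$-th iteration while~\eqref{eq:Non-Optimal-Variable-Upper-Bound} holds for \emph{all} $k$; this uniformity is exactly what makes the lemma usable in the subsequent objective-independent bound.
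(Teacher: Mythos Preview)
Your argument is correct and follows essentially the same route as the paper: specialize the gap identity \eqref{eq:Primal-Dual-Gap} at $\bm{x}^t$ to reduce to the $m$ basic terms, average to extract $\bar{\jmath}$, and then reuse the identity at $\bm{x}^k$ keeping only the $\bar{\jmath}$-term before dividing by $s_{\bar{\jmath}}^*>0$. The only cosmetic slip is that you divide by $x_{\bar{\jmath}}^t$, not by $m x_{\bar{\jmath}}^t$, to reach the displayed bound on $s_{\bar{\jmath}}^*$.
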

\begin{proof}
  We first prove the former.
  From the equation~\eqref{eq:Primal-Dual-Gap}, we have
  \begin{equation} \notag
    \bm{c}^{\top} \bm{x}^t - z^* = \bm{c}^{\top} \bm{x}^t - \bm{b}^{\top} \bm{y}^* = (\bm{x}^{t})^{\top} \bm{s}^* = \sum_{j\in B^t} x_j^t s_j^*.
  \end{equation}
  Since $\bm{x}^t \ge \bm{0}$, $\bm{s}^* \ge \bm{0}$, and $|B^t|=m$, there exists $\bar{\jmath}\in B^t$ such that
  \begin{equation} \notag
    x_{\bar{\jmath}}^t s_{\bar{\jmath}}^* \ge \frac{1}{m} \left(\bm{c}^{\top} \bm{x}^t - z^* \right).
  \end{equation}
  Here $\bm{x}^t$ is not optimal and thus the right-hand side is positive.
  Hence, $x_{\bar{\jmath}}^t > 0$, which shows the existence of a variable satisfying the conditions~\eqref{eq:Optimal-Dual-Slack-Lower-Bound}.

  Next we prove the latter．
  As shown above, for any positive integer $k$, 
  \begin{equation} \notag
    \bm{c}^{\top} \bm{x}^{k} - z^* = (\bm{x}^{k})^{\top} \bm{s}^{*} = \sum_{j=1}^n x_j^{k} s_j^*.
  \end{equation}
  In addition, $x_j^{k} \ge 0$ and $s_j^* \ge 0 \ (j=1,2,\ldots,n)$ hold, and thus we obtain
  \begin{equation} \notag
    \bm{c}^{\top} \bm{x}^{k} - z^* \ge x_{\bar{\jmath}}^{k} s_{\bar{\jmath}}^*.
  \end{equation}
  Using this and the inequality~\eqref{eq:Optimal-Dual-Slack-Lower-Bound}, we have
  \begin{equation} \notag
    x_{\bar{\jmath}}^{k} \le \frac{\bm{c}^{\top} \bm{x}^{k} - z^*}{s_{\bar{\jmath}}^*} \le m x_{\bar{\jmath}}^t \, \frac{\bm{c}^{\top} \bm{x}^{k} - z^*}{\bm{c}^{\top} \bm{x}^t - z^*}.
  \end{equation}
\end{proof}
Same as Lemma~\ref{lem:Lower-Bound-Of-Optimal-Value}, this proof is not based on a property of a specific pivoting rule.
Accordingly, Lemma~\ref{lem:Non-Optimal-Variable-Upper-Bound} also holds for the simplex method with the $p$-norm rule.

We now have an upper bound independent of the objective value.
\begin{theorem} \label{thm:Upper-Bound-Without-Objective-Function-Ratio}
  When applying the simplex method with the $p$-norm rule to the problem~\eqref{eq:LP-Primal}, the number of iterations is at most
  \begin{equation} \label{eq:Upper-Bound-Without-Objective-Function-Ratio}
    (n-m) \left\lceil \frac{m\gamma}{\ratio\delta} \log\left(\frac{m\gamma}{\delta}\right) \right\rceil.
  \end{equation}
\end{theorem}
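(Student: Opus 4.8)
The plan is to bound how long any single variable can remain basic, and then run a pigeonhole argument over the at most $n-m$ variables that vanish at the optimum. Write $K = \left\lceil \frac{m\gamma}{\ratio\delta}\log\left(\frac{m\gamma}{\delta}\right)\right\rceil$ for the bracketed quantity in the claim. Fix an iteration $t$ at which $\bm{x}^t$ is not optimal and let $\bar{\jmath}\in B^t$ be the index supplied by Lemma~\ref{lem:Non-Optimal-Variable-Upper-Bound}; then $x_{\bar{\jmath}}^t>0$, so by nondegeneracy $\delta\le x_{\bar{\jmath}}^t\le\gamma$, and $s_{\bar{\jmath}}^*>0$, the latter forcing $x_{\bar{\jmath}}^*=0$ by complementary slackness. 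Thus every index produced by the lemma is one of the variables that are zero in the optimal BFS.

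First I would turn inequality~\eqref{eq:Non-Optimal-Variable-Upper-Bound} into a decay estimate. Iterating Lemma~\ref{lem:Gap-Rate} from iteration $t$ gives $\bm{c}^{\top}\bm{x}^{k}-z^* \le \left(1-\frac{\ratio\delta}{m\gamma}\right)^{k-t}\left(\bm{c}^{\top}\bm{x}^{t}-z^*\right)$ for every $k\ge t$, so combining with~\eqref{eq:Non-Optimal-Variable-Upper-Bound} and $x_{\bar{\jmath}}^t\le\gamma$ yields $x_{\bar{\jmath}}^{k} \le m\gamma\left(1-\frac{\ratio\delta}{m\gamma}\right)^{k-t}$. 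Solving $m\gamma\left(1-\frac{\ratio\delta}{m\gamma}\right)^{k-t}<\delta$ for $k-t$ and reusing the inequality $-\log(1-x)>x$ on $0<x<1$ already invoked in Theorem~\ref{thm:Upper-Bound-With-Second-Optimal-Ratio}, I would show that $k-t\ge K$ already forces $x_{\bar{\jmath}}^{k}<\delta$. Since under nondegeneracy every positive component is at least $\delta$, this means $x_{\bar{\jmath}}^{k}=0$ for all $k\ge t+K$: the variable $\bar{\jmath}$ leaves the basis and never returns after $K$ further iterations.

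Finally I would count. Suppose the method ran for more than $(n-m)K$ iterations and inspect the checkpoints $t_i=iK$ for $i=0,1,\ldots,n-m$; each $\bm{x}^{t_i}$ is then nonoptimal and yields an index $\bar{\jmath}_i$ as above. Because $\bar{\jmath}_i$ is identically zero from iteration $t_{i+1}=t_i+K$ onward, whereas $x_{\bar{\jmath}_{i'}}^{t_{i'}}>0$ for $i'>i$, the indices $\bar{\jmath}_0,\ldots,\bar{\jmath}_{n-m}$ are pairwise distinct. Yet each satisfies $x^*_{\bar{\jmath}_i}=0$, and a nondegenerate optimal BFS has exactly $n-m$ vanishing components, so there cannot be $n-m+1$ distinct such indices, a contradiction. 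Hence the method terminates within $(n-m)K$ iterations, which is the claimed bound. I expect the counting step to be the crux: the delicate points are justifying that each chosen variable stays at zero for \emph{all} subsequent iterations (not merely at the next checkpoint) and pinning the pigeonhole capacity to exactly $n-m$ through complementary slackness and nondegeneracy, whereas the decay estimate is a routine combination of the two cited lemmas.
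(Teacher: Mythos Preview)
Your proposal is correct and follows essentially the same route as the paper: combine Lemma~\ref{lem:Gap-Rate} with Lemma~\ref{lem:Non-Optimal-Variable-Upper-Bound} to get the decay estimate $x_{\bar{\jmath}}^{t+r}\le m\gamma\left(1-\frac{\ratio\delta}{m\gamma}\right)^{r}$, deduce that after $K$ iterations the selected variable drops below $\delta$ and is hence permanently zero, and repeat at most $n-m$ times. The only minor difference is in how the cap of $n-m$ repetitions is justified: the paper simply observes that every BFS needs $m$ positive basic variables, so at most $n-m$ variables can be permanently eliminated, whereas you invoke complementary slackness to argue that each eliminated index is nonbasic in the optimal BFS and there are exactly $n-m$ of those---an equivalent but slightly more explicit bookkeeping.
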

\begin{proof}
  Let $r$ be an integer that is greater than or equal to~$1$.
  Moreover, let $\bm{x}^t$ and $\bm{x}^{t+r}$ be the $t$-th and $(t+r)$-th solutions of the simplex method, respectively.
  In addition, let $B^t$ be the corresponding basis of $\bm{x}^t$.
  Then, by Lemmas~\ref{lem:Gap-Rate} and \ref{lem:Non-Optimal-Variable-Upper-Bound}, and the definition of $\gamma$, there exists $\bar{\jmath}\in B^t$ such that 
  \begin{equation} \notag
    x_{\bar{\jmath}}^{t+r} \le m x_{\bar{\jmath}}^t \left( 1 - \frac{\ratio\delta}{m\gamma} \right)^r \le m \gamma \left( 1 - \frac{\ratio\delta}{m\gamma} \right)^r.
  \end{equation}
  Thus, when $r \geq (m\gamma)/(\ratio\delta) \cdot \log\left(m\gamma/\delta\right)$, the rightmost term is less than $\delta$, and $x_{\bar{\jmath}}^{t+r}$ is fixed to~$0$ by the definition of~$\delta$.

  If an optimal solution is not obtained after $r$~iterations satisfying the above inequality, we can apply the same procedure again.
  The number of variables that can be chosen as a basis decreases by one through each process.
  Due to the nondegeneracy assumption, the number of positive elements in any basic feasible solution is $m$, and thus this process occurs at most $n-m$ times.
  Hence, we have the desired result.
\end{proof}

\subsection{Lower bound for $\ratio$} \label{sec:Lower-Bound-Ratio}

The ratio~$\ratio$ is contained in the upper bounds in Theorems~\ref{thm:Upper-Bound-With-Second-Optimal-Ratio} and~\ref{thm:Upper-Bound-Without-Objective-Function-Ratio}.
In this section, we analyze a lower bound for~$\ratio$ to make these upper bounds clearer.

The definition of $\ratio$ is as follows:
\begin{equation} \notag
  \ratio = \min\iset{q_{N}}{N\in\mathcal{N}}, \quad \ratio_N = \frac{ \getheight\norm[p]{\left(\bm{v}_N\right)_s} }{ \getheight\norm[p]{\left(\bm{v}_N\right)_d} }.
\end{equation}
Hence, a lower bound for $\ratio$ can be derived from a lower bound of $\norm[p]{\left(\bm{v}_N\right)_s}$ divided by an upper bound of~$\norm[p]{\left(\bm{v}_N\right)_d}$.

Let $x_{j_k}$ and~$x_{i_r}$ be entering and leaving variables when the solution $\bm{x}^t$ changes to~$\bm{x}^{t+1}$ at the $t$-th iteration of the simplex method, respectively.
Set $\bm{w} = \bm{x}^{t+1}-\bm{x}^t$.
Since nonbasic variables except $x_{j_k}$ is unchanged through the iteration, we have
\begin{equation} \notag
  \norm[p]{\bm{w}}^p = |w_{i_1}|^p + |w_{i_2}|^p + \cdots + |w_{i_m}|^p + |w_{j_k}|^p.
\end{equation}
By the definition of~$\gamma$ and the nonnegative constraints of variables, any element of $\bm{w}$ is in the range $-\gamma$ to~$\gamma$.
That is, $|w_j| \le \gamma \ (j=1,2,\ldots,n)$ holds.
Thus, we obtain the following inequality:
\begin{equation} \notag
  |w_{j_k}|^p + |w_{i_r}|^p \le \norm[p]{\bm{w}}^p \le |w_{j_k}|^p + |w_{i_r}|^p + (m-1) \gamma^p.
\end{equation}
Furthermore, $\delta \le |w_{i_r}| \le \gamma$ holds due to the nondegeneracy assumption, we have
\begin{equation} \label{eq:X-Bounds-With-Xjk}
  |w_{j_k}|^p + \delta^p \le \norm[p]{\bm{w}}^p \le |w_{j_k}|^p + m \gamma^p.
\end{equation}
As mentioned in Section~\ref{sec:steepest-edge-rule}, each basic variable changes $- \bar{a}_{ik} \theta_k ~(i=1,2,\ldots,m)$ when $x_{j_k}$ increases by $\theta_k = |w_{j_k}|$.
Thus, the $p$-norm of the difference vector~$\bm{w}$ is
\begin{equation} \notag
  \norm[p]{\bm{w}} = \left( |w_{j_k}|^p + \sum_{i=1}^{m} |- \bar{a}_{ik} w_{j_k}|^p \right)^{1/p} = |w_{j_k}| \left( 1 + |\bar{a}_{ik}|^p \right)^{1/p}.
\end{equation}
Using the notation introduced in Section~\ref{sec:p-norm-rule}, the equality can be expressed~as
\begin{equation} \notag
  \norm[p]{\bm{w}} = |w_{j_k}| \cdot \norm[p]{\left(\bm{v}_N\right)_k}.
\end{equation}
Dividing the both sides of the inequality~\eqref{eq:X-Bounds-With-Xjk} by $|w_{j_k}|^p > 0$, we obtain
\begin{equation} \notag
  1 + \frac{\delta^p}{|w_{j_k}|^p} \le \norm[p]{\left(\bm{v}_N\right)_k}^p \le 1 + m \frac{\gamma^p}{|w_{j_k}|^p}.
\end{equation}
The relationship $\delta \le |w_{j_k}| \le \gamma$ gives upper and lower bounds for $\norm[p]{\left(\bm{v}_N\right)_k}^p$:
\begin{equation} \label{eq:Norm-Bounds}
  1 + \frac{\delta^p}{\gamma^p} \le \norm[p]{\left(\bm{v}_N\right)_k}^p \le 1 + m \frac{\gamma^p}{\delta^p}.
\end{equation}

Let $x_{j_d}$ and~$x_{j_s}$ be nonbasic variables chosen by the most negative coefficient and $p$-norm rules, respectively.
From the inequality~\eqref{eq:Norm-Bounds}, we have 
\begin{equation} \notag
  \frac{ \getheight\norm[p]{\left(\bm{v}_N\right)_s}^p }{ \getheight\norm[p]{\left(\bm{v}_N\right)_d}^p } \ge \frac{1 + (\delta/\gamma)^p}{1 + m (\gamma/\delta)^p} \ge \frac{1 + (\delta/\gamma)^p}{m + m (\gamma/\delta)^p} \ge \frac{\delta^p}{m\gamma^p}.
\end{equation}
Therefore $(\delta/\gamma) m^{-1/p}$ is a lower bound for~$\ratio$.
Applying this bound to Theorems~\ref{thm:Upper-Bound-With-Second-Optimal-Ratio} and~\ref{thm:Upper-Bound-Without-Objective-Function-Ratio}, the following theorems are immediately obtained.
\begin{theorem} \label{thm:Iteration-Upper-Bound-1}
  The simplex method with the $p$-norm rule finds an optimal solution in at most
  \begin{equation}
    \left\lceil m^{1+\frac{1}{p}} \frac{\gamma^2}{\delta^2} \log\left( \frac{\getheight\bm{c}^{\top}\bm{x}^0 - z^*}{\getheight\bm{c}^{\top}\bar{\bm{x}}-z^*} \right) \right\rceil
  \end{equation}
  iterations for the problem~\eqref{eq:LP-Primal}.
\end{theorem}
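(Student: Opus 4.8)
The plan is to obtain this theorem as an immediate corollary of Theorem~\ref{thm:Upper-Bound-With-Second-Optimal-Ratio} by substituting the lower bound $\ratio \ge (\delta/\gamma)\,m^{-1/p}$ derived just above into the bound~\eqref{eq:Upper-Bound-With-Second-Optimal-Ratio}. The parameter $\ratio$ enters that bound only through the factor $m\gamma/(\ratio\delta)$, which is monotonically decreasing in $\ratio$; hence a lower bound on $\ratio$ translates directly into an upper bound on that factor.

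Concretely, first I would rewrite $\ratio \ge (\delta/\gamma)\,m^{-1/p}$ as $1/\ratio \le (\gamma/\delta)\,m^{1/p}$ and multiply through to obtain
\begin{equation} \notag
  \frac{m\gamma}{\ratio\delta} \le \frac{m\gamma}{\delta}\cdot\frac{\gamma}{\delta}\,m^{1/p} = m^{1+\frac{1}{p}}\frac{\gamma^2}{\delta^2}.
\end{equation}
Next I would carry this inequality past the logarithm and the ceiling. The argument of the logarithm satisfies $(\bm{c}^{\top}\bm{x}^0-z^*)/(\bm{c}^{\top}\bar{\bm{x}}-z^*)\ge 1$, because $\bm{x}^0$ is a non-optimal BFS while $\bar{\bm{x}}$ attains the smallest objective value among non-optimal BFSs, so that $\bm{c}^{\top}\bm{x}^0-z^*\ge \bm{c}^{\top}\bar{\bm{x}}-z^*>0$. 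Consequently the logarithmic factor is nonnegative, and multiplying both sides of the displayed inequality by it preserves the inequality; since the ceiling $\lceil\cdot\rceil$ is nondecreasing, applying it to both sides yields exactly the claimed bound.

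I do not expect a genuine obstacle here, as the theorem is a direct specialization of Theorem~\ref{thm:Upper-Bound-With-Second-Optimal-Ratio}. The only points requiring care are the monotonicity of $m\gamma/(\ratio\delta)$ in $\ratio$ and the nonnegativity of the logarithm, both of which are needed to ensure the inequality direction is preserved through the ceiling. All the analytical work has already been done in establishing Theorem~\ref{thm:Upper-Bound-With-Second-Optimal-Ratio} and in deriving the lower bound $(\delta/\gamma)\,m^{-1/p}$ for $\ratio$.
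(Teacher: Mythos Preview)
Your proposal is correct and takes essentially the same approach as the paper, which simply states that the theorem is immediately obtained by applying the lower bound $\ratio \ge (\delta/\gamma)\,m^{-1/p}$ to Theorem~\ref{thm:Upper-Bound-With-Second-Optimal-Ratio}. Your added justification concerning monotonicity in $\ratio$ and nonnegativity of the logarithm makes explicit what the paper leaves implicit.
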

\begin{theorem} \label{thm:Iteration-Upper-Bound-2}
  The simplex method with the $p$-norm rule finds an optimal solution in at most
  \begin{equation}
    (n-m) \left\lceil m^{1+\frac{1}{p}} \frac{\gamma^2}{\delta^2} \log\left(\frac{m\gamma}{\delta}\right) \right\rceil.
  \end{equation}
  iterations for the problem~\eqref{eq:LP-Primal}.
\end{theorem}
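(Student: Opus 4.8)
The plan is to obtain this bound as an immediate corollary of Theorem~\ref{thm:Upper-Bound-Without-Objective-Function-Ratio} combined with the lower bound on $\ratio$ derived at the end of Section~\ref{sec:Lower-Bound-Ratio}, exactly mirroring how Theorem~\ref{thm:Iteration-Upper-Bound-1} is read off from Theorem~\ref{thm:Upper-Bound-With-Second-Optimal-Ratio}. Theorem~\ref{thm:Upper-Bound-Without-Objective-Function-Ratio} already establishes that the simplex method with the $p$-norm rule terminates within $(n-m)\lceil (m\gamma)/(\ratio\delta)\,\log(m\gamma/\delta)\rceil$ iterations, so all the genuinely substantive work---the nondegeneracy argument that fixes some basic variable to~$0$ after sufficiently many iterations, and the counting that caps the number of such phases at $n-m$---is already in hand. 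What remains is purely to replace the implicit quantity $\ratio$ by the explicit data $m$, $\gamma$, $\delta$, and~$p$.

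First I would recall the estimate $\ratio \ge (\delta/\gamma)\,m^{-1/p}$ proved in Section~\ref{sec:Lower-Bound-Ratio}, which rearranges to $1/\ratio \le (\gamma/\delta)\,m^{1/p}$. Substituting this into the coefficient of the logarithm yields
\begin{equation} \notag
  \frac{m\gamma}{\ratio\delta} \;\le\; \frac{m\gamma}{\delta}\cdot\frac{\gamma}{\delta}\,m^{1/p} \;=\; m^{1+\frac{1}{p}}\,\frac{\gamma^2}{\delta^2}.
\end{equation}
Because the factor $\log(m\gamma/\delta)$ is a positive constant independent of $\ratio$, this inequality lifts directly to the product, giving $(m\gamma)/(\ratio\delta)\,\log(m\gamma/\delta) \le m^{1+1/p}(\gamma^2/\delta^2)\log(m\gamma/\delta)$.

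Next I would invoke monotonicity twice to pass from this inequality to the stated bound: the ceiling function is nondecreasing, so the ceiling of the smaller expression is at most the ceiling of the larger one, and multiplication by the positive integer $n-m$ preserves the inequality. Chaining these observations with the conclusion of Theorem~\ref{thm:Upper-Bound-Without-Objective-Function-Ratio} produces exactly the claimed iteration count.

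I do not anticipate a genuine obstacle, since the result is essentially a single substitution resting on machinery already proved. The one point deserving mild care is that $\ratio$ must be replaced by its lower bound \emph{before} the ceiling is applied, and that one should explicitly note the whole expression is monotone in $1/\ratio$---which holds precisely because the logarithmic factor does not involve $\ratio$. Conceptually identical reasoning yields Theorem~\ref{thm:Iteration-Upper-Bound-1} from Theorem~\ref{thm:Upper-Bound-With-Second-Optimal-Ratio}.
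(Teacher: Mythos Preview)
Your proposal is correct and matches the paper's approach exactly: the paper simply states that Theorems~\ref{thm:Iteration-Upper-Bound-1} and~\ref{thm:Iteration-Upper-Bound-2} are ``immediately obtained'' by applying the lower bound $\ratio \ge (\delta/\gamma)m^{-1/p}$ to Theorems~\ref{thm:Upper-Bound-With-Second-Optimal-Ratio} and~\ref{thm:Upper-Bound-Without-Objective-Function-Ratio}, which is precisely the substitution-plus-monotonicity argument you spell out.
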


\section{Application to Markov decision problem} \label{sec:MDP}

Ye~\cite{Ye-2011} proved that the simplex method with the most negative coefficient rule is a strongly polynomial-time algorithm for solving the discounted Markov decision problem (DMDP) with a fixed discount factor; this result motivated the research on the number of different BFSs by Kitahara and Mizuno~\cite{Kitahara-Mizuno-2013-a,Kitahara-Mizuno-2013-b}, which we mentioned in Section~\ref{sec:Previous-Research}.

On the other hand, the simplex method with the smallest index rule takes exponential time to solve the DMDP regardless of discount factors~\cite{Melekopoglou-Condon-1994}.

These results imply that pivoting rules are crucial for the complexity to solve the DMDP\@.
Since the DMDP satisfies the assumptions in Section~\ref{sec:Assumption-Notation}, we can apply the upper bound in Theorem~\ref{thm:Iteration-Upper-Bound-2} to analyze the DMDP\@.

Here we prove that the simplex method with the $p$-norm rule is also a strongly polynomial-time algorithm for the DMDP with a fixed discount factor.
The LP formulation and some properties of the DMDP are based on those given by Ye~\cite{Ye-2011}.

The DMDP can be formulated as the following LP:
\begin{equation} \label{eq:DMDP-Formulation}
  \begin{alignedat}{2}
    &\text{minimize}   & \qquad & \bm{c}^{\top} \bm{x}\\
    &\text{subject to} & \qquad & \left(E - \theta P\right) \bm{x} = \bm{e},\\
    &                  &        & \bm{x} \ge \bm{0},
  \end{alignedat}
\end{equation}
where $\theta \in [0,1)$ is the discount factor, $\bm{e} \in \mathbb{R}^{m}$ is the vector of all ones,
$E \in \mathbb{R}^{m \times n}$ is a 0-1 matrix that indicates whether the action $j$ can be chosen in the state $i$, $\bm{c} \in \mathbb{R}^{n}$ is a vector calculated by immediate costs, and $P \in \mathbb{R}^{m \times n}$ consists of transition probabilities.

Let $\bm{x}$ be a BFS of the problem~\eqref{eq:DMDP-Formulation}.
Ye~\cite{Ye-2005} proved that if $x_j$ is a basic variable, then $x_j$ satisfies
\begin{equation}
  1 \le x_j \le \frac{m}{1-\theta}.
\end{equation}
This inequality implies that the minimum and maximum values of all the positive elements of BFSs are not less than~$1$ and not more than $m/(1-\theta)$, respectively; in other words, $\delta \ge 1$ and $\gamma \le m/(1-\theta)$ hold.
Moreover, the problem~\eqref{eq:DMDP-Formulation} is nondegenerate.
We therefore obtain the following theorem.
\begin{theorem} \label{thm:DMDP-Upper-Bound}
  Consider the DMDP with $m$~states, $n$~actions, and a discount factor~$\theta$ formulated as the problem~\eqref{eq:DMDP-Formulation}.
  The simplex method with the $p$-norm rule takes at most
  \begin{equation}
    (n-m) \left\lceil \frac{m^{3+\frac{1}{p}}}{(1-\theta)^2} \log\left( \frac{m^2}{1-\theta} \right) \right\rceil
  \end{equation}
  iterations to solve the problem~\eqref{eq:DMDP-Formulation}, and is a strongly polynomial-time algorithm for the DMDP with a fixed discount factor.
\end{theorem}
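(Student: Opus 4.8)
The plan is to derive the stated iteration count by substituting the problem-specific bounds on $\gamma$ and $\delta$ into the general bound of Theorem~\ref{thm:Iteration-Upper-Bound-2}, and then to observe that the resulting count is polynomial in $m$ and $n$ once $\theta$ is fixed. The excerpt has already recorded the two facts needed from Ye's analysis of the DMDP: the formulation~\eqref{eq:DMDP-Formulation} is nondegenerate, and every positive basic component lies between $1$ and $m/(1-\theta)$, so that $\delta \ge 1$ and $\gamma \le m/(1-\theta)$. Since the DMDP satisfies all assumptions of Section~\ref{sec:Assumption-Notation}, Theorem~\ref{thm:Iteration-Upper-Bound-2} applies verbatim.

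First I would bound the ratio $\gamma/\delta$. From $\delta \ge 1$ and $\gamma \le m/(1-\theta)$ we get $\gamma/\delta \le m/(1-\theta)$, hence $\gamma^2/\delta^2 \le m^2/(1-\theta)^2$ and $m\gamma/\delta \le m^2/(1-\theta)$. The only point requiring a word of justification is monotonicity: the bound in Theorem~\ref{thm:Iteration-Upper-Bound-2} is an increasing function of $\gamma/\delta$, since both the prefactor $\gamma^2/\delta^2$ and the argument of the logarithm increase with $\gamma/\delta$, while $m\gamma/\delta \ge 1$ (because $\gamma \ge \delta$ and $m \ge 1$) keeps the logarithm nonnegative so that taking the ceiling preserves the inequality. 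Substituting the two upper bounds therefore yields
\begin{equation} \notag
  m^{1+\frac{1}{p}}\,\frac{\gamma^2}{\delta^2}\,\log\!\left(\frac{m\gamma}{\delta}\right)
  \le \frac{m^{3+\frac{1}{p}}}{(1-\theta)^2}\,\log\!\left(\frac{m^2}{1-\theta}\right),
\end{equation}
and multiplying by $(n-m)$ and taking ceilings gives exactly the claimed count.

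It then remains to conclude strong polynomiality when $\theta$ is fixed. For fixed $\theta$ the factor $1/(1-\theta)$ is a constant, so the displayed count is $O\!\big((n-m)\,m^{3+\frac{1}{p}}\log m\big)$, i.e.\ polynomial in $m$ and $n$ and independent of the bit-length of the data $E$, $P$, and $\bm{c}$. I would then note that each iteration of the $p$-norm rule costs only a polynomial number of arithmetic operations: forming the columns $(\bm{v}_N)_k$ together with their $p$-norms and selecting a minimizer of $\bar{c}_k/\norm[p]{(\bm{v}_N)_k}$ is dominated by the usual linear-algebra cost of a pivot. Combining the polynomial iteration count with the polynomial per-iteration cost gives the strongly polynomial conclusion.

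The computations above are mechanical, so there is no real obstacle in the derivation of the explicit bound. The only step deserving care is the strong-polynomiality claim: one must confirm that evaluating the $p$-norm pivoting criterion does not introduce numbers whose size grows with the data, and that comparing the ratios $\bar{c}_k/\norm[p]{(\bm{v}_N)_k}$ can be carried out within the arithmetic model---for instance by comparing $p$-th powers to avoid roots---so that the per-iteration work remains strongly polynomial. Granting the standard fact that a simplex pivot is strongly polynomial per step, the theorem follows.
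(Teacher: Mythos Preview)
Your proposal is correct and follows the same approach as the paper: the paper simply records $\delta \ge 1$, $\gamma \le m/(1-\theta)$, and nondegeneracy, then states the theorem as an immediate consequence of Theorem~\ref{thm:Iteration-Upper-Bound-2} with no further argument. Your explicit monotonicity check and the discussion of per-iteration cost go beyond what the paper writes but are entirely consistent with its intended (one-line) proof.
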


\section{Discussion and future work} \label{sec:Discussion-Future-Work}

In this paper, we proposed the $p$-norm rule as a pivoting rule for the simplex method, which is a generalization of the steepest-edge rule.
In addition, we showed two upper bounds for the number of iterations taken by the simplex method with the $p$-norm rule for a nondegenerate LP\@.
One of the upper bounds is expressed~as
\begin{equation} \notag
  \left\lceil m^{1+\frac{1}{p}} \frac{\gamma^2}{\delta^2} \log\left( \frac{\getheight\bm{c}^{\top}\bm{x}^0-z^*}{\getheight\bm{c}^{\top}\bar{\bm{x}}-z^*} \right) \right\rceil,
\end{equation}
which depends on the second optimal solution (Theorem~\ref{thm:Iteration-Upper-Bound-1}); the other is
\begin{equation} \notag
  (n-m) \left\lceil m^{1+\frac{1}{p}} \frac{\gamma^2}{\delta^2} \log\left(\frac{m\gamma}{\delta}\right) \right\rceil,
\end{equation}
which is independent of the objective value (Theorem~\ref{thm:Iteration-Upper-Bound-2}).

The discounted Markov decision problem satisfies the assumptions we need for analysis; our results proved that the simplex method with the $p$-norm rule is a strongly polynomial-time algorithm for solving the problem with a fixed discount factor.

There are some further directions for this study.
Firstly, it is known that the steepest-edge rule takes fewer iterations than the most negative coefficient rule  and than the best improvement rule by computational experiments.
However, the upper bounds obtained in this paper are larger than that for these rules,
\begin{equation} \notag
  (n-m) \left\lceil \frac{m\gamma}{\delta} \log\left( \frac{m\gamma}{\delta} \right) \right\rceil,
\end{equation}
obtained by Kitahara and Mizuno~\cite{Kitahara-Mizuno-2013-a}.
Thus, to find a better upper bound is a further direction.
Secondly, there remains a matter to be discussed whether the simplex method with $p$-norm rule is strongly polynomial-time for solving the discounted Markov decision problem regardless of discount factors.
Yet another future work is to remove the nondegeneracy assumption of LPs for the $p$-norm rule.

\section*{Acknowledgement}

This work was partially supported by JSPS KAKENHI Grant Numbers JP17K01246 and JP15K15941.

\end{document}